\newtheorem{dfn}{Definition}[section]
\newtheorem{lem}[dfn]{Lemma}
\newtheorem{thm}[dfn]{Theorem}
\newtheorem{prop}[dfn]{Proposition}
\newtheorem{remark}[dfn]{Remark}
\newtheorem{exm}[dfn]{Example}
\newcommand{\R}{\mathbb{R}}
\title{A Second-Order  Dynamical System for Solving Generalized Inverse Mixed Variational Inequality problems }
\author{Nam Van Tran ${ }^{1 *}$ \\
	${ }^{1}$ Faculty of Applied Sciences,\\ Ho Chi Minh City University of Technology and Engineering,\\ Ho Chi Minh, Vietnam.\\
	*Corresponding author(t). E-mail(t): namtv@hcmute.edu.vn;\\
}
\date{}
\begin{document}
	\maketitle
	\captionsetup{singlelinecheck=false}

	\begin{abstract}
		In this paper, we study a class of generalized inverse mixed variational inequality problems (GIMVIPs). We propose a novel projection-based second-order time-varying dynamical system for solving GIMVIPs. Under the assumptions that the underlying operators are strongly monotone and Lipschitz continuous, we establish the existence and uniqueness of solution trajectories and prove their global exponential convergence to the unique solution of the GIMVIP. Furthermore, a discrete-time realization of the continuous dynamical system is developed, resulting in an inertial projection algorithm. We show that the proposed algorithm achieves linear convergence under suitable choices of parameters. Finally, numerical experiments are presented to illustrate the effectiveness and convergence behavior of the proposed method in solving GIMVIPs.
	\end{abstract}
	
	Keywords: Second order dynamical system, Generalized Inverse mixed variational inequalities,  Strong monotonicity, Global exponential stability.
	
	MSC Classification (2020): 47J20, 65P40 90C30, 37C75.
	
\section{Introduction}

Let $H$ be a real Hilbert space endowed with the inner product $\langle \cdot , \cdot \rangle$ and the induced norm $\|\cdot\|$. Let $K \subset H$ be a nonempty, closed, and convex set. Given a continuous operator $\phi : H \to H$, the \textbf{variational inequality problem (VIP)} consists in finding a point $u^* \in K$ such that
\begin{equation*}\label{pt1}
	\langle \phi(u^*), u - u^* \rangle \ge 0, \quad \forall u \in K.
\end{equation*}
The VIP serves as a powerful and unified framework for modeling a vast array of problems in nonlinear analysis, mathematical economics, and engineering \cite{10, 35}. Over the past decades, VIPs have been extensively studied, leading to the development of numerous numerical schemes, \textbf{particularly those based on projection-type methods} \cite{5, 7, 34}.

In many real-world scenarios, such as transportation equilibria or competitive market models, the underlying operator $\phi$ cannot be observed directly. Instead, the problem must be formulated using an inverse mapping, which gives rise to the \textbf{inverse variational inequality problem (IVIP)}. Let $T : H \to H$ be a single-valued continuous operator. The IVIP associated with $T$ and $K$ seeks a point $w^* \in H$ such that
\begin{equation*}\label{pt4}
	T(w^*) \in K \quad \text{and} \quad \langle v - T(w^*), w^* \rangle \ge 0, \quad \forall v \in K.
\end{equation*}
While IVIPs have recently gained significant attention \cite{2, 11, 17, 18}, modern applications demand even greater modeling flexibility. This led to the introduction of mixed variational inequalities, which incorporate an additional convex function $f$ to account for non-smooth components or regularizations. In this paper, we focus on a significantly more general model: the \textbf{generalized inverse mixed variational inequality (GIMVI)}. The GIMVI consists in finding $w^* \in H$ such that $T(w^*) \in K$ and
\begin{equation}\label{pt10}
	\langle g(w^*), y - T(w^*) \rangle + f(y) - f\big(T(w^*)\big) \ge 0, \quad \forall y \in K,
\end{equation}
where $T, g : H \to H$ are continuous mappings. This framework strictly extends traditional inverse models and provides a robust structure for solving complex equilibrium problems \cite{22, 24, JIT, Nam}.

A highly effective strategy for solving \eqref{pt10} involves the use of continuous-time dynamical systems \cite{1,  19, 31, 33}. Traditional research has predominantly focused on first-order time-varying dynamical systems  \cite{JIT, Nam}, typically formulated as:
\begin{equation*}\label{pt11}
	\dot{w}(t) = \rho \big[ P_K^{\gamma f}\big(T(w(t)) - \gamma g(w(t))\big) - T(w(t)) \big].
\end{equation*}

Although first-order systems are well-established, they often lack the inertial "momentum" required to achieve rapid convergence in complex optimization landscapes. Inspired by the \textbf{``heavy-ball''} method and inertial acceleration principles \cite{9, 37}, second-order dynamical systems---which incorporate an acceleration term $\ddot{w}(t)$---have emerged as a superior alternative. These systems effectively leverage inertial effects to dampen oscillations and accelerate the trajectory toward the equilibrium point \cite{26, 30}. Despite their advantages, the application of second-order dynamical system to the GIMVI problem remains a critical gap in the existing literature, particularly regarding the rigorous proof of global stability and convergence rates.

Motivated by these challenges, we propose the following second-order time-varying dynamical system to address the GIMVI problem:
\begin{equation}\label{pt12}
	\left\{
	\begin{aligned}
		\ddot{w}(t) &+ \kappa(t)\dot{w}(t) + \rho(t)\big[ T(w(t)) - P_K^{\gamma f}\big(T(w(t)) - \gamma g(w(t))\big) \big] = 0, \\
		w(0) &= w_0, \quad \dot{w}(0) = w_1,
	\end{aligned}
	\right.
\end{equation}
where $\kappa(t)$ and $\rho(t)$ are time-varying parameters and $P_K^{\gamma f}$ denotes the generalized $f$-projection operator.

The primary contributions of this work are summarized as follows:
\begin{itemize}
	\item We establish the \textbf{global existence and uniqueness} of the solution trajectories for the proposed second-order system \eqref{pt12} under mild conditions.
	\item By constructing a suitable Lyapunov functional, we prove that the trajectories converge \textbf{exponentially} to the unique solution $w^*$ of the GIMVI problem. This result is particularly significant as it demonstrates that second-order inertial systems can maintain superior convergence rates even for generalized inverse models.
	\item We derive an \textbf{accelerated discrete algorithm} from the continuous model and establish its linear convergence rate, bridging the gap between theoretical dynamical systems and practical numerical implementation.
	\item Numerical experiments are provided to demonstrate that the second-order system exhibits robust performance and rapid error reduction compared to traditional first-order approaches.
\end{itemize}

The remainder of this paper is organized as follows.
Section~\ref{sec.prelim} introduces the essential preliminaries, definitions, and notation used throughout the paper.
In Section~\ref{sec.existence}, we establish the existence and uniqueness of the solution trajectories for the proposed second-order dynamical system.
Section~\ref{hoitumu} is devoted to the analysis of the global exponential stability of these trajectories.
The discrete counterpart of the continuous system is investigated in Section~\ref{roirachoa}, where its convergence properties are analyzed.
Numerical experiments illustrating the effectiveness of the proposed algorithms are presented in Section~\ref{vdso}.
Finally, concluding remarks are provided in Section~\ref{kl}.

\section{Definitions and Preliminaries}\label{sec.prelim}

In this section, we recall several definitions and auxiliary results that will be used throughout the paper.

Let $H$ be a real Hilbert space endowed with the inner product $\langle \cdot , \cdot \rangle$ and the induced norm $\|\cdot\|$.  
We consider the product space $H \times H$, equipped with the inner product and norm introduced in \cite{37}: for all $(x,y), (\bar{x},\bar{y}) \in H \times H$,
\[
\langle (x,y), (\bar{x},\bar{y}) \rangle
:= \langle x, \bar{x} \rangle + \langle y, \bar{y} \rangle,
\qquad
\|(x,y)\| := \sqrt{\|x\|^2 + \|y\|^2}.
\]

We denote by $L_{\mathrm{loc}}^{1}(\mathbb{R}^{+} \cup \{0\})$ the space of functions $f$ such that
$f \in L^{1}([0,c])$ for every $c > 0$.

Next, we recall several notions related to operator properties.

\begin{dfn}\cite{36}
	Let $K$ be a nonempty subset of $H$. An operator $T : H \to H$ is said to be
	\begin{itemize}
		\item[(i)] \emph{monotone} on $K$ if
		\[
		\langle T(w) - T(v), w - v \rangle \ge 0,
		\quad \forall\, w, v \in K;
		\]
		\item[(ii)] \emph{strongly monotone} on $K$ with modulus $\lambda > 0$ if
		\[
		\langle T(w) - T(v), w - v \rangle \ge \lambda \|w - v\|^2,
		\quad \forall\, w, v \in K.
		\]
	\end{itemize}
\end{dfn}

\begin{dfn}
	Let $K$ be a nonempty, closed, and convex subset of $H$.
	Let $T, g : H \to H$ be two operators and let $\alpha > 0$.
	The pair $(T,g)$ is said to be \emph{$\alpha$-strongly coupled monotone} on $K$ if
	\[
	\langle T(w) - T(v), g(w) - g(v) \rangle
	\ge \alpha \|w - v\|^2,
	\quad \forall\, w, v \in K.
	\]
\end{dfn}

When $g$ is the identity operator, the above definition reduces to the classical notion of strong monotonicity.

\begin{dfn}\cite{36}
	An operator $T : H \to H$ is said to be \emph{$\eta$-Lipschitz continuous} on $K$ if there exists a constant $\eta \ge 0$ such that
	\[
	\|T(w) - T(v)\| \le \eta \|w - v\|,
	\quad \forall\, w, v \in K.
	\]
\end{dfn}

In this paper, we employ the generalized $f$-projection operator (see \cite{Wu}), defined by
\begin{equation*}\label{pt3}
	P_{K}^{\gamma f}(w)
	:= \arg\min_{v \in K}
	\left\{
	\gamma f(v) + \frac{1}{2}\|w - v\|^2
	\right\},
\end{equation*}
where $K$ is a nonempty, closed, and convex subset of $H$,
$f : K \to \mathbb{R} \cup \{+\infty\}$ is a proper, convex, and lower semicontinuous function, and $\gamma > 0$.

Observe that if $f$ is the indicator function of $K$, namely
\[
f(w) =
\begin{cases}
	0, & w \in K, \\
	+\infty, & w \notin K,
\end{cases}
\]
then $P_{K}^{\gamma f}$ reduces to the classical metric projection onto $K$.
In the literature, $P_{K}^{\gamma f}$ is also known as the \emph{proximal operator}.

The generalized $f$-projection operator enjoys several useful properties, which are summarized below.

\begin{lem}\label{lm2.1}\cite{35}
	Let $K$ be a nonempty, closed, and convex subset of $H$. Then:
	\begin{enumerate}
		\item[(i)] 
		\[
		\langle w - P_{K}^{\gamma f}(w), P_{K}^{\gamma f}(w) - v \rangle \ge 0,
		\quad \forall\, w \in H, \ v \in K;
		\]
		\item[(ii)] $P_{K}^{\gamma f}$ is nonexpansive, i.e.,
		\[
		\|P_{K}^{\gamma f}(w) - P_{K}^{\gamma f}(v)\|
		\le \|w - v\|,
		\quad \forall\, w, v \in H;
		\]
		\item[(iii)] For $w \in H$, $w^* = P_{K}^{\gamma f}(w)$ if and only if
		\[
		\langle w^* - w, v - w^* \rangle
		+ \gamma f(v) - \gamma f(w^*) \ge 0,
		\quad \forall\, v \in K.
		\]
	\end{enumerate}
\end{lem}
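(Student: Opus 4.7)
The plan is to exploit the fact that the minimization problem defining $P_K^{\gamma f}(w)$ has a strongly convex objective, so standard convex-analytic machinery delivers all three properties. Set $\Phi_w(v) := \gamma f(v) + \tfrac{1}{2}\|w - v\|^{2}$ on $K$; this is proper, lower semicontinuous, and strongly convex with modulus $1$ (the quadratic term provides strong convexity, and adding the convex $f$ preserves it), so $\Phi_w$ admits a unique minimizer, ensuring $P_K^{\gamma f}(w)$ is well defined.

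I would prove (iii) first, since (i) and (ii) fall out of it. The forward direction uses the first-order optimality condition at the minimizer $w^* \in K$: for any $v \in K$ and $t \in (0,1]$, the convex combination $w^* + t(v - w^*)$ lies in $K$, and $\Phi_w\big(w^* + t(v - w^*)\big) \ge \Phi_w(w^*)$. Expanding the squared norm, using convexity of $f$ to obtain $f(w^* + t(v-w^*)) \le (1-t)f(w^*) + tf(v)$, dividing by $t$, and letting $t \to 0^{+}$, one arrives at
\[
\langle w^* - w, v - w^* \rangle + \gamma f(v) - \gamma f(w^*) \ge 0.
\]
The converse follows from strong convexity: the displayed inequality is exactly the variational characterization of a minimizer of a convex function on $K$, so any $w^*$ satisfying it must equal the unique minimizer. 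Statement (i) is then obtained by specializing (iii): choose $v \in K$ and rewrite the resulting inequality (absorbing the $f$-terms into the inner product via the characterization of $w^*$ as an optimizer of $\Phi_w$), yielding the acute-angle condition.

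For (ii), I would invoke (iii) twice. Writing $w_1^* := P_K^{\gamma f}(w)$ and $w_2^* := P_K^{\gamma f}(\bar w)$, apply (iii) to the pair $(w, w_1^*)$ with test point $v = w_2^* \in K$, and then to the pair $(\bar w, w_2^*)$ with test point $v = w_1^*$. Adding the two inequalities makes the $\gamma f$ terms cancel and leaves
\[
\langle w_1^* - w, w_2^* - w_1^* \rangle + \langle w_2^* - \bar w, w_1^* - w_2^* \rangle \ge 0,
\]
which rearranges to $\|w_1^* - w_2^*\|^2 \le \langle w - \bar w, w_1^* - w_2^* \rangle$. Cauchy–Schwarz then yields $\|w_1^* - w_2^*\| \le \|w - \bar w\|$.

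There is no genuine obstacle here; the result is classical and the harder work is really in verifying that $\Phi_w$ has a minimizer in $K \cap \operatorname{dom}(f)$ (handled by lower semicontinuity, coercivity from the quadratic term, and properness of $f$) and in keeping track of the directional-derivative argument that passes from the one-sided optimality inequality at $t \to 0^+$ to the global variational inequality. The rest is bookkeeping.
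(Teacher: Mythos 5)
The paper offers no proof of this lemma at all---it is quoted from \cite{35}---so your argument has to stand on its own. Your treatment of (iii) (first-order optimality along the segment $w^*+t(v-w^*)$, convexity of $f$, passage to the limit $t\to 0^{+}$, and the converse via strong convexity of $\Phi_w$) and of (ii) (apply (iii) twice with swapped test points, add so the $\gamma f$ terms cancel, then Cauchy--Schwarz) is the standard route and is correct as sketched.

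The gap is in (i). From (iii) with $w^*=P_K^{\gamma f}(w)$ you get only $\langle w-w^*,\,w^*-v\rangle \ge \gamma\bigl(f(w^*)-f(v)\bigr)$ for all $v\in K$, and the right-hand side has no definite sign, so the $f$-terms cannot simply be ``absorbed into the inner product.'' Indeed (i) as stated fails for a general proper convex l.s.c.\ $f$: take $H=K=\mathbb{R}$, $\gamma=1$, $f(v)=v$; then $P_{K}^{\gamma f}(w)=w-1$ and $\langle w-P_{K}^{\gamma f}(w),\,P_{K}^{\gamma f}(w)-v\rangle = w-1-v$, which is negative whenever $v>w-1$. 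Property (i) is the variational characterization of the \emph{metric} projection and follows from (iii) only when the $f$-terms drop out (e.g.\ $f$ the indicator of $K$, or $f$ constant on $K$); for the generalized $f$-projection the correct inequality is exactly (iii). So either (i) needs the extra hypothesis that reduces $P_{K}^{\gamma f}$ to $P_{K}$, or any derivation of it must retain the correction term $\gamma f(v)-\gamma f(w^*)$ --- your plan as written cannot close this step.
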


Throughout this paper, we frequently use the following classical inequalities.
For any $w, v \in H$,
\[
\langle w, v \rangle \le \|w\|\,\|v\|,
\qquad \text{(Cauchy--Schwarz inequality)}
\]
and for any $\varepsilon > 0$, Young’s inequality holds:
\[
\|w\|\,\|v\|
\le \frac{\|w\|^2}{2\varepsilon}
+ \frac{\varepsilon \|v\|^2}{2}.
\]

\noindent\textbf{Notation.}
For convenience, we define
\[
A(w(t))
:= T(w(t))
- P_{K}^{\gamma f}\!\left(
T(w(t)) - \gamma g(w(t))
\right).
\]

We now provide a characterization of solutions to the GIMVIP, showing that a point is a solution of the GIMVIP \eqref{pt10} if and only if it is a zero of the operator $A$. 
\begin{prop}  Let $K$ be a nonempty, closed, convex  subset of $H$. Let  $f: K \longrightarrow \mathbb{R} \cup\{+\infty\}$ be a proper, convex, and lower semicontinuous function and $ T: H\longrightarrow H$ be $\eta$-Lipschitz continuous and $\lambda$ be strongly monotone on $K$, and $g: H\longrightarrow H$. Then, $ w^* $ is a solution of GIMVIP \eqref{pt10} if and only if $ w^* $ satisfies	
	\begin{equation*}\label{pt19} 
		T\left( w^* \right)=P^{\gamma f}_{K}\left(T\left( w^* \right)-\gamma  g(w^*) \right).
	\end{equation*}	
\end{prop}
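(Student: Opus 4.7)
The plan is to recognize this as a standard fixed-point reformulation and to derive both implications from Lemma~\ref{lm2.1}(iii) alone. The hypotheses that $T$ is Lipschitz and that the pair $(T,g)$ is strongly monotone are not actually needed for this equivalence; they are kept in the statement because the subsequent sections will rely on them, but for the present proposition they play no role. The essential content is a direct translation between the variational inequality defining the GIMVIP and the variational characterization of the generalized $f$-projection.

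For the forward implication, I would start from a solution $w^{*}$ of \eqref{pt10}, so $T(w^{*})\in K$ and
\[
\langle g(w^{*}), y - T(w^{*})\rangle + f(y) - f(T(w^{*})) \ge 0,\qquad \forall y\in K.
\]
Multiplying by $\gamma>0$ and rewriting $\gamma g(w^{*}) = T(w^{*}) - \bigl(T(w^{*}) - \gamma g(w^{*})\bigr)$, the inequality becomes
\[
\langle T(w^{*}) - \bigl(T(w^{*}) - \gamma g(w^{*})\bigr),\; y - T(w^{*})\rangle + \gamma f(y) - \gamma f(T(w^{*})) \ge 0,\qquad \forall y\in K,
\]
which is exactly the condition of Lemma~\ref{lm2.1}(iii) applied to the point $T(w^{*}) - \gamma g(w^{*}) \in H$, certifying that $T(w^{*}) = P_{K}^{\gamma f}\bigl(T(w^{*}) - \gamma g(w^{*})\bigr)$.

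For the converse, I would assume the fixed-point identity and immediately note that $T(w^{*}) \in K$ is automatic, since the range of $P_{K}^{\gamma f}$ lies in $K$. Applying Lemma~\ref{lm2.1}(iii) in the other direction with $w := T(w^{*}) - \gamma g(w^{*})$ yields
\[
\langle \gamma g(w^{*}), y - T(w^{*})\rangle + \gamma f(y) - \gamma f(T(w^{*})) \ge 0,\qquad \forall y\in K,
\]
and dividing by $\gamma > 0$ recovers \eqref{pt10}.

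There is essentially no obstacle: the argument is a bookkeeping exercise using Lemma~\ref{lm2.1}(iii) and the fact that $\gamma>0$ may be absorbed or factored at will. The only detail to keep in mind is consistency of the constraint $T(w^{*})\in K$ in both directions, which is assumed in the GIMVIP formulation and is automatic from the definition of the projection on the other side.
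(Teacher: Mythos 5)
Your proposal is correct and follows exactly the same route as the paper's own proof: both directions are obtained by rewriting $\gamma g(w^{*}) = T(w^{*}) - \bigl(T(w^{*}) - \gamma g(w^{*})\bigr)$ and invoking Lemma~\ref{lm2.1}(iii). Your added remarks — that the Lipschitz and monotonicity hypotheses are not actually used, and that $T(w^{*})\in K$ is automatic in the converse direction since the range of $P_{K}^{\gamma f}$ lies in $K$ — are accurate and slightly more careful than the paper's write-up.
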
	
\begin{proof} Assume that $ w^* $ is the solution of \eqref{pt10}, i.e. $T\left( w^* \right) \in K$ and
	
	$$
	\left\langle\gamma  g(w^*) , y-T\left( w^* \right)\right\rangle+\gamma f(y)-\gamma f\left(T\left( w^* \right)\right) \geq 0, \quad \forall y \in K .
	$$	It follows that 	
	$$
	\left\langle T\left( w^* \right)-\left(T\left( w^* \right)-\gamma g\left(w^*\right) \right), y-T\left( w^* \right)\right\rangle+\gamma f(y)-\gamma f\left(T\left( w^* \right)\right) \geq 0, \quad \forall y \in K .
	$$	
	By Part (iii) of Lemma \ref{lm2.1}, it holds that 
	
	$$
	T\left( w^* \right)=P_{K}^{\gamma f}\left(T\left( w^* \right)-\gamma g\left(w^*\right) \right) .
	$$
	We now suppose that $T\left( w^* \right)=P_{K}^{\gamma f}\left(T\left( w^* \right)-\gamma g\left(w^*\right) \right)$. By Part (iii) of Lemma \ref{lm2.1}, one has 	
	$$
	\left.\left\langle T\left( w^* \right)-\left(T\left( w^* \right)-\gamma g\left(w^*\right) \right)\right), y-T\left( w^* \right)\right\rangle+\gamma f(y)-\gamma f\left(T\left( w^* \right)\right) \geq 0, \quad \forall y \in K .
	$$
	Because $\gamma>0$ it follows that 
	$$
	\left\langle w^* , y-T\left( w^* \right)\right\rangle+f(y)-f\left(T\left( w^* \right)\right) \geq 0, \quad \forall y \in K .
	$$
	Thus, $ w^* $ is a solution of \eqref{pt10}. 
\end{proof}

The lemma below provides some estimates for the operator $A$ defined above. 
	\begin{lem}  Let $K$ be a nonempty convex, subset subset of $H$. Let  $ T: H\longrightarrow H$ be $\eta$-Lipschitz continuous and $\lambda$ be a strongly monotone on $K$, $f: K\longrightarrow \R\cup\{+\infty\}$ be a proper, convex, l.s.c function,  and $g: H\longrightarrow H$ be a $\beta$-Lipschitz continuous operator. Let $ w^* $ be the unique solution of the GIMVIP \eqref{pt10}. Assume that the pair $(T, g)$ is $\alpha$-strongly coupled monotone. For all $\gamma>0$ and $w \in H$,  we define 
		\begin{equation*}\label{dna1} 
			a_{1}:=\frac{a}{(2 \eta+\gamma\beta)^{2}},
		\end{equation*}
		and \begin{equation}\label{dna}
			a:=\left(\lambda+\gamma \alpha-\frac{\eta^{2}}{2}-\frac{\gamma^{2}\beta}{2}-\frac{1}{2}\right)
		\end{equation}
		If $a>0$ then 	
		\begin{equation}\label{pt23} 
			\left\langle w- w^* ,  A(w)\right\rangle \geq a_{1}\| A(w)\|^{2}, 
		\end{equation}	
		\begin{equation}\label{pt23n} 
			a\|w- w^*\|\leq \|A(w)\| ,
		\end{equation}
		and	
		\begin{equation}\label{pt24} 
			\left\langle -A(w), w- w^* \right\rangle \leq-a\left\|w- w^* \right\|^{2}.
		\end{equation}
	\end{lem}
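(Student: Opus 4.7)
The plan is to establish \eqref{pt24} first, since \eqref{pt23n} and \eqref{pt23} then follow as routine corollaries. Write $p := P_{K}^{\gamma f}(T(w) - \gamma g(w))$, so that $A(w) = T(w) - p$; by the preceding proposition, $T(w^*) = P_{K}^{\gamma f}(T(w^*) - \gamma g(w^*))$ and hence $A(w^*) = 0$.

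For \eqref{pt24}, I would split
\[
\langle A(w), w - w^* \rangle
= \langle T(w) - T(w^*), w - w^* \rangle
+ \langle T(w^*) - p, w - w^* \rangle,
\]
bound the first summand from below by $\lambda \|w - w^*\|^2$ via strong monotonicity of $T$, and estimate the second by Cauchy--Schwarz followed by Young's inequality with $\varepsilon = 1$, yielding
\[
\langle T(w^*) - p, w - w^* \rangle
\ge -\tfrac{1}{2}\|T(w^*) - p\|^2 - \tfrac{1}{2}\|w - w^*\|^2.
\]
The heart of the argument is then the bound on $\|T(w^*) - p\|^2$. By the nonexpansiveness of $P_{K}^{\gamma f}$ (Lemma~\ref{lm2.1}(ii)),
\[
\|T(w^*) - p\|^2
\le \|(T(w) - \gamma g(w)) - (T(w^*) - \gamma g(w^*))\|^2,
\]
and expanding the squared norm on the right and applying, respectively, the $\eta$-Lipschitzness of $T$ to $\|T(w) - T(w^*)\|^2$, the strong coupled monotonicity of $(T,g)$ to the cross term $-2\gamma \langle T(w) - T(w^*), g(w) - g(w^*) \rangle$, and the $\beta$-Lipschitzness of $g$ to $\gamma^{2}\|g(w) - g(w^*)\|^2$, produces a bound that is a multiple of $\|w - w^*\|^2$. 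Substituting back and collecting coefficients reproduces \eqref{pt24} with $a$ as defined in \eqref{dna}.

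The remaining two inequalities then fall out quickly. Inequality \eqref{pt23n} is immediate from \eqref{pt24} via Cauchy--Schwarz: $\|A(w)\|\,\|w - w^*\| \ge \langle A(w), w - w^*\rangle \ge a\|w - w^*\|^2$. For \eqref{pt23}, I would first derive the companion upper bound
\[
\|A(w)\|
\le \|T(w) - T(w^*)\| + \|T(w^*) - p\|
\le (2\eta + \gamma\beta)\|w - w^*\|
\]
by combining the triangle inequality with the $\eta$-Lipschitzness of $T$ and the nonexpansiveness of $P_{K}^{\gamma f}$, and then divide \eqref{pt24} through by $(2\eta + \gamma\beta)^2$. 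The only mildly delicate step in the whole argument is the $\|T(w^*) - p\|^2$ estimate, where the negative contribution from the strong coupled monotonicity of $(T,g)$ must properly offset the positive Lipschitz contributions from $T$ and $g$; this is precisely where the positivity requirement $a > 0$ encoded in \eqref{dna} is used.
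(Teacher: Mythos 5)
Your proposal follows essentially the same route as the paper's own proof: the same decomposition of $\langle A(w), w-w^*\rangle$ into the strongly monotone term for $T$ plus the projection term, the same Young-plus-nonexpansiveness estimate of $\|T(w^*)-p\|^2$ expanded via the Lipschitz constants and the strong coupled monotonicity of $(T,g)$, the same Lipschitz bound $\|A(w)\|\le(2\eta+\gamma\beta)\|w-w^*\|$ to pass to \eqref{pt23}, and Cauchy--Schwarz for \eqref{pt23n}. The only difference is the order in which the three inequalities are assembled, which is immaterial.
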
 	
	
	\begin{proof}	
		From  \eqref{pt16}, one has 
		\begin{equation}\label{pt25} 
			\|A(w)\|=	\left\|A w-A  w^* \right\| \leq(2 \eta+\gamma \beta)\left\|w- w^* \right\|.
		\end{equation}
		In addition, we have 
		\begin{equation}\label{pt26} 
			\begin{array}{lll}
				& \left\langle A(w), w- w^* \right\rangle \\
				=& \left\langle A(w)-A(w^*), w- w^* \right\rangle \\
				= & \left\langle T(w)-T\left( w^* \right), w- w^* \right\rangle  -\left\langle G(w)-G(w^*), w- w^* \right\rangle .
			\end{array}	
		\end{equation}
		where $G(w):=P_{K}^{\gamma f}( T(w)-\gamma g(w))$. 
		The $\alpha$-strong coupled monotonicity of $ (T, g)$ implies that 
		\begin{equation}\label{pt27} 
			\left\langle T(w)-T\left( w^* \right), g(w)- g\left(w^*\right) \right\rangle \geq \alpha\left\|w- w^* \right\|^{2}.
		\end{equation}	
		Since $ T$ is $\eta$-Lipschitz continuous, $g$ is $\beta$-Lipschitz continuous, using \eqref{pt27}  yields 
		\begin{equation} \label{pt28} 
			\begin{array}{ll}
				& \left\langle G(w)-G(w^*), w- w^* \right\rangle \\
				& \leq \frac{1}{2}\left\|w- w^* \right\|^{2}+\frac{1}{2}\left\|G(w)-G(w^*) \right\|^{2} \\
				& \leq \frac{1}{2}\left\|w- w^* \right\|^{2}+\frac{1}{2}\left\|( T(w)-\gamma g(w))-\left(T\left( w^* \right)-\gamma g\left(w^*\right) \right)\right\|^{2} \\
				& =\frac{1}{2}\left\|w- w^* \right\|^{2}+\frac{1}{2}\left\| T(w)-T\left( w^* \right)\right\|^{2} + +\frac{1}{2} \gamma^{2} \|g(w)-g(w^*)\|^2 \\
				& -\gamma\left\langle T(w)-T\left( w^* \right), g(w)- g\left(w^*\right) \right\rangle \\
				& \leq\left(\frac{1}{2}+\frac{1}{2} \eta^{2}+\frac{1}{2} \gamma^{2}\beta-\gamma \alpha\right)\left\|w- w^* \right\|^{2} . 
			\end{array}
		\end{equation} 	
		Combining \eqref{pt28} and \eqref{pt26}, and using the $\lambda$-strong monotonicity of $T$ we obtain	
		\begin{equation}\label{pt29} 
			\left\langle A(w), w- w^* \right\rangle \geq\left(\lambda+\gamma \alpha-\frac{\eta^{2}}{2}-\frac{\gamma^{2}\beta}{2}-\frac{1}{2}\right)\left\|w- w^* \right\|^{2}=a\|w-w^*\|^2 . 
		\end{equation}	
		It follows that 
		\begin{equation*}
			a\|w-w^*\|^2\leq \|A(w)\|\cdot\|w-w^*\|,
		\end{equation*}
		which implies inequality \eqref{pt23n}.
		
		Now, combining \eqref{pt25} and  \eqref{pt29} with noting that $a>0$ yields 
		
		\begin{equation*} \label{pt30} 
			\begin{array}{ll}
				a_{1}\left\| A(w)\right\|^{2} & =\frac{\left(\lambda+\gamma \alpha-\frac{\eta^{2}}{2}-\frac{\gamma^{2}\beta}{2}-\frac{1}{2}\right)}{(2 \eta+\gamma\beta)^{2}}\left\|A w-A  w^* \right\|^{2} \\
				& \leq a \|w-w^*\|\leq \left\langle A(w), w- w^* \right\rangle .
			\end{array}
		\end{equation*} 	
		The last inequality also implies that  
		$$
		\left\langle -A(w), w- w^* \right\rangle \leq-a\left\|w- w^* \right\|^{2}.
		$$ 
		and hence, \eqref{pt24} holds true. 
	\end{proof}	
		
\section{The existence and uniqueness of solutions}\label{sec.existence} 
 
 In this section, we present conditions under which the existence and uniqueness of the trajectory for the dynamical system \eqref{pt12} are obtained.
 
 We first establish conditions guaranteeing the existence and uniqueness of the solution to the GIMVIP \eqref{pt10}.
 
 \begin{thm}\cite{JIT}\label{unique sol}
 	Let $ g~:  H \longrightarrow H $  be a Lipschitz continuous single-valued mapping with Lipschitz constant $\beta$. Let   \(T: H \longrightarrow H\) be a  $\eta-$ Lipschitz continuous single-valued mapping. Let $ f~: K \longrightarrow \mathbb{R}\cup \{+\infty\}$  be a proper, convex, and lower semicontinuous function.  Assume further that  
 	\begin{enumerate}
 		\item [(i)] $g$ is $\zeta$-monotone on $H$, and $(T, g)$ is $\alpha$-strongly monotone on $H$. 
 	
 		\item [(ii)]   \(\sqrt{\eta^2+\beta^2-2\alpha}+\sqrt{1-2\zeta+\beta^2}<1,\)
 		where $\alpha<\frac{\beta^2+\eta^2}{2}, \zeta < \frac{1+\beta^2}{2}$.  	
 	\end{enumerate}
 Then the GIMVIP \eqref{pt10} has a unique solution.
 \end{thm}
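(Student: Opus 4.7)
The plan is to combine the fixed-point characterization established in the preceding Proposition with the Banach contraction principle applied to a carefully chosen reformulation. By that Proposition (taking $\gamma=1$ for concreteness, the general case being analogous), $w^*$ solves the GIMVIP \eqref{pt10} if and only if $T(w^*)=P_K^{f}(T(w^*)-g(w^*))$. Adding $w^*-T(w^*)$ to both sides and regrouping,
\[
w^* = (I-g)(w^*) + (P_K^{f}-I)\bigl((T-g)(w^*)\bigr),
\]
which suggests defining $F:H\to H$ by $F(w):=(I-g)(w)+(P_K^{f}-I)\bigl((T-g)(w)\bigr)$ and proving that $F$ is a strict contraction.

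The second step is to estimate the two summands of $F(w)-F(v)$ separately. For the first, expanding the square and using the $\zeta$-monotonicity together with the $\beta$-Lipschitz continuity of $g$ yields
\[
\|(I-g)(w)-(I-g)(v)\|^{2} \le (1-2\zeta+\beta^{2})\,\|w-v\|^{2},
\]
where the right-hand side is nonnegative thanks to the side condition $\zeta<(1+\beta^{2})/2$. For the second, I would first derive the firm nonexpansiveness of $P_K^{f}$ from Lemma~\ref{lm2.1}(iii) by testing the characterizing inequality at $y=P_K^{f}(v)$ and $y=P_K^{f}(w)$, adding, and rearranging to obtain $\langle P_K^{f}(w)-P_K^{f}(v),\,w-v\rangle\ge\|P_K^{f}(w)-P_K^{f}(v)\|^{2}$; a short computation then shows that $P_K^{f}-I$ is itself nonexpansive. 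Combining this with the bound
\[
\|(T-g)(w)-(T-g)(v)\|^{2} \le (\eta^{2}+\beta^{2}-2\alpha)\,\|w-v\|^{2},
\]
which follows from the Lipschitz continuity of $T$ and $g$ together with the $\alpha$-strong coupled monotonicity of $(T,g)$ (and is nonnegative by $\alpha<(\beta^{2}+\eta^{2})/2$), produces
\[
\bigl\|(P_K^{f}-I)((T-g)(w))-(P_K^{f}-I)((T-g)(v))\bigr\| \le \sqrt{\eta^{2}+\beta^{2}-2\alpha}\,\|w-v\|.
\]

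Adding the two estimates gives
\[
\|F(w)-F(v)\| \le \Bigl(\sqrt{1-2\zeta+\beta^{2}}+\sqrt{\eta^{2}+\beta^{2}-2\alpha}\Bigr)\|w-v\|,
\]
which by hypothesis (ii) is strictly less than $\|w-v\|$. The Banach contraction principle then delivers a unique fixed point $w^*\in H$, and the equivalence from the first step identifies $w^*$ as the unique solution of the GIMVIP.

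The main obstacle is the decomposition chosen for $F$. The naive splitting $F(w)=w-T(w)+P_K^{f}(T(w)-g(w))$ forces the term $(w-v)-(T(w)-T(v))$ to be bounded by the triangle inequality passing through $g$, which introduces an extra copy of $\sqrt{\eta^{2}+\beta^{2}-2\alpha}$ and fails to match the sharp constant in hypothesis (ii). Writing instead $F=(I-g)+(P_K^{f}-I)\circ(T-g)$ is engineered precisely to exploit the firm nonexpansiveness of $P_K^{f}$ (rather than just its nonexpansiveness), and this is what absorbs the redundant term and recovers the announced coefficient.
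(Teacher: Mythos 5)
The paper itself gives no proof of this theorem; it is imported from \cite{JIT}. Your argument is correct and is essentially the contraction argument used in that reference: the splitting $F=(I-g)+(P_K^{f}-I)\circ(T-g)$, the firm nonexpansiveness of the generalized $f$-projection (which is what makes $P_K^{f}-I$ nonexpansive rather than $2$-Lipschitz), and the two strong-monotonicity expansions reproduce exactly the constant $\sqrt{\eta^{2}+\beta^{2}-2\alpha}+\sqrt{1-2\zeta+\beta^{2}}$ of hypothesis~(ii), after which the Proposition's fixed-point characterization of solutions of \eqref{pt10} transfers existence and uniqueness back to the GIMVIP.
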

We now define the strong global solution to the dynamical system \eqref{pt12}.  
	\begin{dfn}
 We say that $w:[0,+\infty) \rightarrow \mathcal{H}$ is a strong global solution of \eqref{pt12} if it satisfies the following properties:
 \begin{itemize}
 	\item [(i)] $w, \dot{w}:[0,+\infty) \rightarrow \mathcal{H}$ are locally absolutely continuous, in other words, absolutely continuous on each interval $[0, c]$ for $0<c<+\infty$;
 	\item [(ii)] $\ddot{w}(t)+\kappa(t) \dot{w}(t)+\rho(t) A(w(t))=0$ for almost every $t \in[0,+\infty)$;
 	\item [(iii)] $w(0)=w_{0}$ and $\dot{w}(0)=w_{1}$.
 \end{itemize} 
\end{dfn}	

The next theorem provides sufficient conditions ensuring the existence and uniqueness of solutions to the dynamical system \eqref{pt12}.

	\begin{thm} \label{dl3.1} Let $\rho, \kappa: \mathbb{R}^{+} \cup\{0\} \longrightarrow \mathbb{R}^{+} \cup\{0\}$ are Lebesgue measurable functions such that $\rho, \kappa \in L_{\text {loc }}^{1}\left(\mathbb{R}^{+} \cup\{0\}\right)$, let $T: H\longrightarrow H$ be an $\eta$-Lipschitz continuous operator and $g~:H \to H$ be a $\beta$-Lipschitz continuous operator, $f: K\longrightarrow \R$ be a proper, convex, l.s.c function. Then, the dynamical system \eqref{pt12} has a unique strong global solution for each $w_{0}, v_{0} \in H$.
		\end{thm}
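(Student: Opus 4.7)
The plan is to reduce the second-order system \eqref{pt12} to an equivalent first-order Cauchy problem on the product Hilbert space $H\times H$ and then invoke a classical Carathéodory-type existence and uniqueness theorem (Picard--Lindelöf for measurable right-hand sides). Concretely, I would introduce the state $z(t):=(w(t),\dot w(t))\in H\times H$ and rewrite \eqref{pt12} as
\[
\dot z(t) = F(t,z(t)),\qquad z(0)=(w_0,w_1),
\]
where $F:[0,+\infty)\times(H\times H)\to H\times H$ is defined by
\[
F(t,(u,v)) := \bigl(v,\;-\kappa(t)v - \rho(t)A(u)\bigr).
\]
A strong global solution of \eqref{pt12} in the sense of the definition above corresponds exactly to a locally absolutely continuous solution of this first-order problem.

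The key analytic step is to establish a global Lipschitz estimate for $A$. Using the nonexpansiveness of $P_K^{\gamma f}$ from Lemma \ref{lm2.1}(ii) together with the Lipschitz continuity of $T$ and $g$, I would derive, for all $u_1,u_2\in H$,
\[
\|A(u_1)-A(u_2)\| \le \|T(u_1)-T(u_2)\| + \|(T(u_1)-\gamma g(u_1))-(T(u_2)-\gamma g(u_2))\| \le (2\eta+\gamma\beta)\|u_1-u_2\|.
\]
Combined with the product-space norm, this yields
\[
\|F(t,z_1)-F(t,z_2)\| \le L(t)\,\|z_1-z_2\|,\qquad L(t):=1+\kappa(t)+(2\eta+\gamma\beta)\rho(t),
\]
and since $\kappa,\rho\in L^1_{\mathrm{loc}}(\mathbb R^+\cup\{0\})$, one has $L\in L^1_{\mathrm{loc}}(\mathbb R^+\cup\{0\})$. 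Measurability of $t\mapsto F(t,z)$ for fixed $z$ is immediate from the measurability of $\kappa$ and $\rho$, so $F$ satisfies the Carathéodory conditions with integrable Lipschitz modulus.

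With these ingredients in place, I would apply the classical Picard--Lindelöf theorem for Carathéodory ODEs in Hilbert space to obtain, on every compact interval $[0,c]$, a unique absolutely continuous solution $z=(w,\dot w)$. Because the Lipschitz modulus $L(t)$ is \emph{global} in the state variable (not only local), Grönwall's lemma supplies an a priori bound $\|z(t)\|\le (\|z(0)\|+\int_0^c\rho(s)\|A(0)\|\,ds)\exp\!\int_0^c L(s)\,ds$ on each $[0,c]$, which rules out finite-time blow-up and permits extension of the unique local solution to all of $[0,+\infty)$. The only subtle point, and the one I would be most careful about, is verifying that the Lipschitz-and-measurability framework really delivers both components $w$ and $\dot w$ as locally absolutely continuous (as required by items (i)--(iii) of the definition); this follows automatically because $z\in W^{1,1}_{\mathrm{loc}}([0,+\infty);H\times H)$ and hence each coordinate inherits local absolute continuity, with $\ddot w(t)=-\kappa(t)\dot w(t)-\rho(t)A(w(t))$ holding for a.e.\ $t\ge 0$. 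Uniqueness across $[0,+\infty)$ then follows from uniqueness on each $[0,c]$ by a standard contradiction argument.
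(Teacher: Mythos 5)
Your proposal is correct and follows essentially the same route as the paper: reduce \eqref{pt12} to a first-order Cauchy problem on $H\times H$, show the operator $A$ is globally $(2\eta+\gamma\beta)$-Lipschitz via the nonexpansiveness of $P_K^{\gamma f}$, obtain a locally integrable Lipschitz modulus for the right-hand side from $\kappa,\rho\in L^1_{\mathrm{loc}}$, and invoke the Cauchy--Lipschitz--Picard theorem for Carath\'eodory systems. The only cosmetic difference is that the paper bounds the Lipschitz modulus by $1+\sqrt{2}\kappa(t)+\sqrt{2}\tau\rho(t)$ and separately verifies $B(\cdot,x,y)\in L^1([0,c],H\times H)$ to get globality directly from the cited theorem, whereas you package the same facts through a Gr\"onwall extension argument; both are standard and equivalent here.
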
 	
		\begin{proof}
			Recall that the operator  $A: H \longrightarrow H$ is defined  by	
			$$
			A(w)= T(w)-P_{K}^{\gamma f}( T(w)-\gamma g(w)), \quad \forall w \in H .
			$$	
			Then, the dynamical system \eqref{pt12} reads as
			\begin{equation}\label{pt15} 
				\left\{\begin{array}{l}
					\ddot{w}(t)+\kappa(t) \dot{w}(t)+\rho(t) A w(t)=0,  \\
					w(0)=w_{0}, \quad \dot{w}(0)=v_{0} .
				\end{array}\right.
			\end{equation} 
			By the Cauchy-Schwarz inequality,  Part (ii) of Lemma \ref{lm2.1}, and the Lipschitz continuity of $ T$, it follows that $\forall w, w^* \in H$ and $\gamma>0$ :	
			\begin{align}\label{pt16} 
				\left\|A w-A w^*\right\| & =\left\|  T(w)-P_{K}^{\gamma f}\left( T(w)-\gamma g(w)\right)- T\left(w^*\right)+P_{K}^{\gamma f}\left( T(w^*)-\gamma g( w^*)\right) \right\| \notag \\
				& \leq \left\| T(w)- T(w^*) \right\|+ \left\|  P_{K}^{\gamma f}\left( T(w)-\gamma g(w)\right)-P_{K}^{\gamma f}\left( T(w^*)-\gamma g( w^*)\right) \right\| \notag \\
				& \leq \eta\|w-w^*\|+\|  T(w)-\gamma g(w)- T(w^*)+\gamma g( w^*)) \| \notag \\
				& \leq \eta\|w-w^*\|+\| T(w)- T(w^*)\|+\gamma\|g(w)-g\left(w^*\right)\| \notag \\
				& \leq \eta\|w-w^*\|+\eta\|w-w^*\|+\gamma\beta\|w-w^*\| \notag \\
				& \leq(2 \eta+\gamma\beta)\|w-w^*\|. 
			\end{align}	
			Hence, $A$ is $\tau$-Lipschitz continuous with modulus $\tau=2 \eta+\gamma\beta>0$.
			
			\noindent	Setting  $$\Psi: \mathbb{R}^{+} \cup\{0\} \longrightarrow H \times H, \Psi(t)=(w(t), \dot{w}(t)),$$ and	
			$$
			B: \mathbb{R}^{+} \cup\{0\} \times H \times H \longrightarrow H \times H, \quad B(t , x, y)=(y,-\kappa(t) y-\rho(t) A(x)) .
			$$	
			Then, \eqref{pt15} reduces to the following first-order dynamical system within the product space $H \times H$:	
			\begin{equation}\label{pt17} 
				\left\{\begin{array}{l}
					\dot{\Psi}(t)=B(t , \Psi(t)), \\
					\Psi(0)=\left(x_{0}, v_{0}\right).
				\end{array}\right.
			\end{equation}

			\noindent Using the Lipschitz continuity of $A$ and Cauchy-Schwarz inequality again, for $x, y, x^*, y^* \in H$ and $\forall t \geq 0$, we have 	
			$$
			\begin{aligned}
				\|B(t , x, y)-B(t , x^*, y^*)\|^2 & =\|y-y^*\|^{2}+\|\kappa(t)(y^*-y)+\rho(t)(A x^*-A x)\|^{2} \\
				& \leq \left(1+2 \kappa^{2}(t)\right)\|y-y^*\|^{2}+2 \tau^{2} \rho^{2}(t)\|x^*-x\|^{2} \\
				&\leq \Big(1+2 \kappa^{2}(t) +2 \tau^{2} \rho^{2}(t)\Big) \left(\|x^*-x\|^{2}+\|y-y^*\|^{2}\right)\\
				& \leq \left(1+2 \kappa^{2}(t)+2 \tau^{2} \rho^{2}(t)\right)\|(x, y)-(x^*, y^*)\|^2. 
			\end{aligned}
			$$	
			Consequently,  
			\begin{equation*}
				\|B(t , x, y)-B(t , x^*, y^*)\| \leq \Big(1+\sqrt{2} \kappa(t)+\tau \sqrt{2} \rho(t) \Big)\|(x, y)-(x^*, y^*)\|.
			\end{equation*}
			Because $\rho, \kappa \in L_{\text {loc }}^{1}\left(\mathbb{R}^{+} \cup\{0\}\right),$ it holds that $ 1+\sqrt{2} \kappa(t)+\tau \sqrt{2} \rho(t)$ is locally integrable. 
			
			We now show that	
			\begin{equation*}\label{pt18}
				B(\cdot, x, y) \in L^{1}\left([0, c], H \times H\right), \quad \forall x, y \in H, \forall c>0. 
			\end{equation*}	
			Indeed, let $c>0$ and $x, y \in H$, one has 
			$$
			\begin{aligned}
				\int_{0}^{c}\|B(t , x, y)\| dt& =\int_{0}^{c} \sqrt{\|y\|^{2}+\|\kappa(t) y+\rho(t) A x\|^{2}} dt\\
				& \leq \int_{0}^{c} \sqrt{\left(1+2 \kappa^{2}(t)\right)\|y\|^{2}+2 \rho^{2}(t)\|A x\|^{2}} dt\\
				& \leq \int_{0}^{c}((1+\sqrt{2} \kappa(t))\|y\|+\sqrt{2} \rho(t)\|A x\|) dt.
			\end{aligned}
			$$	
			Observe that $\|y\|$ and $\|A(x)\|$ are fixed. By the assumptions imposed on $\rho$ and $\kappa$, the above integral is finite. Applying the Cauchy–Lipschitz–Picard theorem to the first-order dynamical system, we obtain the existence and uniqueness of a global strong solution to system \eqref{pt17} (see \cite{15,16} for more details). Moreover, due to the equivalence of \eqref{pt12}, \eqref{pt15}, and \eqref{pt17}, the desired conclusion follows.
		\end{proof}

	\section{Exponential convergence}\label{hoitumu}
	The main result of this section is stated in the following theorem, whose proof relies on techniques introduced by Boț and Csetnek in \cite{37}. Further results on the connection between second-order dynamical systems and related problems are also available in \cite{37}.
	
	\begin{thm}\label{thm4.1}
		Let $K$ be a nonempty, closed, convex subset of $H$. Let $ T:H\longrightarrow H$ be $\eta$-Lipschitz continuous, $\lambda$-strongly monotone, let $g: H\longrightarrow H$ be a $\beta$-Lipschitz continuous, $f:K\longrightarrow \R\cup\{+\infty\}$ be a proper, convex, l.s.c function. Asume that the GIMVIP \eqref{pt10} admits a unique solution $w^*$. 
		Assume further that  
		\begin{enumerate}
			\item [(i)] The pair $(T, g)$ is $\alpha$-strongly coupled monotone.
			\item [(ii)] There exists $\gamma>0$ such that $a>0$  where  $a$ defined as in \eqref{dna}.
		\end{enumerate}		
		Let $\kappa, \mu:[0,+\infty) \longrightarrow[0,+\infty)$ be locally absolutely continuous functions satisfying for every $t \in[0,+\infty)$ that\\
		(i) $1<\kappa \leq \kappa(t) \leq a^{2} a_{1} \rho(t)+1$;\\
		(ii) $\dot{\kappa}(t) \leq 0$ and $\frac{d}{d t}\left(\frac{\kappa(t)}{\mu(t)}\right) \leq 0$;\\
		(iii) $\kappa^{2}(t)-\kappa(t)-\frac{2 \rho(t)}{a_{1}} \geq 0$.
		
		Consequently, the trajectory $w(t)$ generated by \eqref{pt12} converges exponentially to $w^*$ as $t \longrightarrow+\infty$, where $w^*$ is the unique solution of GIMVIP \eqref{pt10}.
	\end{thm}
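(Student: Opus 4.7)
\smallskip
\noindent\textbf{Proof plan.} The approach is Lyapunov-based: construct a nonnegative functional $V(t)$ satisfying $V(t) \geq c_0 \|w(t) - w^*\|^2$ for a uniform constant $c_0 > 0$, establish a differential inequality $\dot V(t) \leq -\sigma V(t)$ for some $\sigma > 0$, and conclude exponential decay of $\|w(t) - w^*\|$ by Gr\"{o}nwall. Following the technique of Bo\c{t}--Csetnek \cite{37}, I would propose the candidate
\begin{equation*}
V(t) \;=\; \mu(t) \left[ \tfrac{1}{2}\bigl\|\dot w(t) + (\kappa(t) - 1)(w(t) - w^*)\bigr\|^2 + \tfrac{\kappa(t) - 1}{2}\|w(t) - w^*\|^2 \right],
\end{equation*}
where the coefficient $\kappa(t) - 1$ is engineered so that, after substituting $\ddot w = -\kappa \dot w - \rho A(w)$ from \eqref{pt12}, the isolated cross term of the form $\langle w - w^*, \dot w\rangle$ cancels exactly. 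The factor $\mu(t)$ encodes the target rate: by condition (ii), $\kappa/\mu$ is nonincreasing so $\mu(t) \geq \mu(0)\,\kappa(t)/\kappa(0)$, and combined with the lower bound in (i) this gives $\mu(t) \geq \mu(0)\kappa/\kappa(0) > 0$, making the lower bound $V(t) \geq c_0\|w(t) - w^*\|^2$ hold uniformly in $t$.

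Differentiating $V$ along a trajectory and substituting the ODE produces a negative contribution $-\mu(t)\|\dot w\|^2$, an indefinite cross term $-\mu(t)\rho(t)\langle\dot w, A(w)\rangle$, a favorable term $-\mu(t)(\kappa - 1)\rho(t)\langle w - w^*, A(w)\rangle$, and residual terms driven by $\dot\kappa$ and by $\dot\mu$ (the latter packaged with $\dot{V}$ via the product rule). The assumptions $\dot\kappa \leq 0$ and $\tfrac{d}{dt}(\kappa/\mu) \leq 0$ are precisely what is needed to absorb the $\dot\kappa$- and $\dot\mu$-contributions as nonpositive. For the indefinite term I would apply Young's inequality,
\[
-\rho(t)\langle \dot w, A(w)\rangle \leq \tfrac{\epsilon}{2}\|\dot w\|^2 + \tfrac{\rho(t)^2}{2\epsilon}\|A(w)\|^2,
\]
and then absorb $\|A(w)\|^2$ into $\|w - w^*\|^2$ using \eqref{pt23} (equivalently, $\|A(w)\|^2 \leq \tfrac{1}{a_1}\langle A(w), w-w^*\rangle$), while \eqref{pt24} converts $-\langle A(w), w-w^*\rangle$ into a strict negative multiple of $\|w - w^*\|^2$ via the constant $a$.

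The delicate step is choosing the Young parameter $\epsilon$ so that both the $\|\dot w\|^2$ and the $\|w - w^*\|^2$ coefficients in the resulting quadratic form are (uniformly in $t$) negative. Here conditions (i) and (iii) play complementary roles: condition (iii), $\kappa^2(t) - \kappa(t) - 2\rho(t)/a_1 \geq 0$, supplies the \emph{upper} envelope on $\rho(t)$ needed to render the $\|w - w^*\|^2$ coefficient nonpositive, while condition (i), rewritten as $\rho(t) \geq (\kappa(t) - 1)/(a^2 a_1)$, supplies the \emph{lower} envelope on $\rho(t)$ that guarantees the damping term $-\|\dot w\|^2$ dominates the Young contribution $\tfrac{\epsilon}{2}\|\dot w\|^2$. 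Pinching $\rho(t)$ between these two bounds yields $\dot V(t) \leq -\sigma V(t)$ for some $\sigma > 0$; integrating and invoking the lower bound $V(t) \geq c_0\|w(t) - w^*\|^2$ gives the claimed exponential convergence.

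\smallskip
\noindent The main obstacle I expect is the careful bookkeeping required to verify that (i)--(iii) really do force the derived quadratic form in $(\|\dot w\|, \|w - w^*\|)$ to be uniformly negative definite with an \emph{explicit} admissible decay exponent $\sigma$, since this amounts to a simultaneous two-parameter optimization (over $\epsilon$ and $\sigma$) constrained by the matched envelope on $\rho(t)$ coming from (i) and (iii); the interplay between the $\dot\mu/\mu$ factor and the $\dot\kappa$ corrections must also be threaded correctly to preserve the dissipation at every $t$.
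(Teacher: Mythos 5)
Your high-level philosophy (an energy functional quadratic in $(\dot w, w-w^*)$, the key estimates \eqref{pt23}, \eqref{pt23n}, \eqref{pt24}, substitution of the ODE \eqref{pt12}) matches the paper's, but the plan has two concrete gaps, and the step you yourself flag as "the main obstacle" is exactly the one that does not close as you have set it up.

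First, the quantitative core. With your $V$, after the cross term $\langle \dot w, w-w^*\rangle$ cancels you are left with $\dot W = -\|\dot w\|^2 - \rho(t)\langle \dot w, A(w)\rangle - (\kappa(t)-1)\rho(t)\langle A(w), w-w^*\rangle$, and handling the middle term by Young with parameter $\epsilon<2$ plus \eqref{pt23} forces the requirement $\rho(t)\le 2\epsilon a_1(\kappa(t)-1)$ in order to keep the $\langle A(w),w-w^*\rangle$ coefficient nonpositive. Condition (iii) only gives $\rho(t)\le \tfrac{a_1}{2}\kappa(t)(\kappa(t)-1)$, which does \emph{not} imply the needed bound when $\kappa(t)$ is large; so the hypotheses, as stated, do not make your quadratic form uniformly negative definite via this route. (You have also swapped the roles of (i) and (iii): in the argument that works, (i), i.e.\ $\rho(t)\ge(\kappa(t)-1)/(a^2a_1)$, controls the \emph{position} coefficient and (iii) controls the \emph{velocity} coefficient.) The paper avoids Young's inequality on $\langle\dot w, A(w)\rangle$ entirely: it works with $\Xi(t)=\tfrac12\|w(t)-w^*\|^2$, splits $a_1\rho(t)\|A(w(t))\|^2$ into two halves, substitutes $A(w(t))=-\tfrac{1}{\rho(t)}(\ddot w(t)+\kappa(t)\dot w(t))$ into one half so that $\langle\ddot w,\dot w\rangle=\tfrac12\tfrac{d}{dt}\|\dot w\|^2$ appears as an exact derivative $n(t)\dot v(t)$ rather than an indefinite term to be absorbed, multiplies by $e^t$, and reads off a nonincreasing quantity; conditions (i) and (iii) then enter only as the sign conditions $m(t)+1-\kappa(t)\ge0$ and $p(t)-n(t)\ge0$. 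That total-derivative bookkeeping is the missing idea.

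Second, the weight $\mu(t)$. The hypotheses constrain $\mu$ only through $\tfrac{d}{dt}(\kappa(t)/\mu(t))\le0$, which bounds $\mu$ from below but not from above, so the term $\dot\mu(t)W(t)$ in $\dot V$ can be arbitrarily large and positive and cannot be "absorbed as nonpositive" as you assert. In fact $\mu$ in the theorem statement is evidently the function $\rho$ (conditions (i) and (iii) and the entire proof involve $\rho$, and condition (ii) is used solely to guarantee that $n(t)=\tfrac{a_1\kappa(t)}{2\rho(t)}$ is nonincreasing); it is not a free Lyapunov weight, and the exponential rate does not come from $\mu$ at all but from the explicit factors $e^t$ and $e^{(\kappa-1)t}$ introduced in the integration step, with the final case analysis on $\kappa\lessgtr2$.
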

	\begin{proof}
		
		By Theorem \ref{dl3.1}, the dynamical system \eqref{pt12} has a unique strong global solution. 
		
		Let $\Xi:[0, \infty) \longrightarrow[0, \infty)$ be defined by $\Xi(t)=\frac{1}{2}\left\|w(t)-w^*\right\|^{2}$. From dynamical system \eqref{pt12}, it holds that 
		$$\left\langle\ddot{w}(t), w(t)-w^*\right\rangle+\kappa(t)\left\langle\dot{w}(t), w(t)-w^*\right\rangle+\rho(t)\left\langle A(w(t)), w(t)-w^*\right\rangle=0,$$
		for almost every $t \in[0,+\infty)$. From the definition of $\Xi$, we have
		$$
		\dot{\Xi}(t)=\left\langle w(t)-w^*, \dot{w}(t)\right\rangle ; \quad \ddot{\Xi} (t)=\|\dot{w}(t)\|^{2}+\left\langle w(t)-w^*, \ddot{w}(t)\right\rangle .
		$$
		Therefore, 
		$$
		\ddot{\Xi} (t)+\kappa(t) \dot{\Xi}(t)+\rho(t)\left\langle A(w(t)), w(t)-w^*\right\rangle=\|\dot{w}(t)\|^{2}
		$$
		Using \eqref{pt23}, for any $w \in H $ we have\\
		$$a_{1}\left\|A(w)\right\|^{2} \leq\left\langle A(w), w-w^*\right\rangle.$$
		Consequently,
		$$\ddot{\Xi} (t)+\kappa(t) \dot{\Xi}(t)+a_{1} \rho(t)\left\|A(w(t))\right\|^{2}-\|\dot{w}(t)\|^{2} \leq 0.$$
		Using dynamical system \eqref{pt12} again, and substituting $A(w(t))=-\frac{1}{\rho(t)}(\ddot{w}(t)+\kappa(t) \dot{w}(t))$ we obtain
		$$
		\begin{aligned}
			& \ddot{\Xi} (t)+\kappa(t) \dot{\Xi}(t)+\frac{1}{2} a_{1} \rho(t)\left\|A(w(t))\right\|^{2}+\frac{a_{1}}{2 \rho(t)}\|\ddot{w}(t)+\kappa(t) \dot{w}(t)\|^{2} \\
			& -\|\dot{w}(t)\|^{2} \leq 0
		\end{aligned}
		$$
		Combining the above inequality with \eqref{pt23}, we get 
		\begin{align*}
			& \ddot{\Xi} (t)+\kappa(t) \dot{\Xi}(t)+a^{2} a_{1} \rho(t) \Xi(t)+\frac{a_{1}}{2 \rho(t)}\|\ddot{w}(t)\|^{2}+\left(\frac{a_{1} \kappa^{2}(t)}{2 \rho(t)}-1\right)\|\dot{w}(t)\|^{2} \\
			& +\frac{a_{1} \kappa(t)}{\rho(t)}\langle\ddot{w}(t), \dot{w}(t)\rangle \leq 0.
		\end{align*}
		By setting
		$$
		\begin{aligned}
			& m(t)=a^{2} a_{1} \rho(t), n(t)=\frac{a_{1} \kappa(t)}{2 \rho(t)}, p(t)=\frac{a_{1} \kappa^{2}(t)}{2 \rho(t)}-1 \\
			& v(t)=\|\dot{w}(t)\|^{2}, \text { for almost every } t \in[0,+\infty)
		\end{aligned}
		$$
		and noting that  $\frac{a_{1}}{2 \rho(t)}\|\ddot{w}(t)\|^{2} \geq 0$ and $\langle\ddot{w}(t), \dot{w}(t)\rangle=\frac{1}{2} \frac{d}{d t}\|\dot{w}(t)\|^{2}$ we get 
		\begin{equation}\label{pt18t} 
			\ddot{\Xi} (t)+\kappa(t) \dot{\Xi}(t)+m(t) \Xi(t)+n(t) \dot{v}(t)+p(t) v(t) \leq 0 . \tag{18}
		\end{equation} 
		Furthermore, one has 
		$$
		\begin{aligned}
			e^{t} \ddot{\Xi} (t)&=\frac{d}{d t}\left(e^{t} \dot{\Xi}(t)\right)-e^{t} \dot{\Xi}(t)=\frac{d}{d t}\left(e^{t} \dot{\Xi}(t)\right)-\frac{d}{d t}\left(e^{t} \Xi(t)\right)+e^{t} \Xi(t), \\
			\kappa(t) e^{t} \dot{\Xi}(t)&=\kappa(t) \frac{d}{d t}\left(e^{t} \Xi(t)\right)-\kappa(t) e^{t} \Xi(t), \\
			n(t) e^{t} \dot{v}(t)&=n(t) \frac{d}{d t}\left(e^{t} v(t)\right)-n(t) e^{t} v(t)
		\end{aligned}
		$$
		Multiplying both sides of \eqref{pt18t} by $e^{t}$ and using the above identities yields 
		\begin{align}\label{pt19t} 
			& \frac{d}{d t}\left(e^{t} \dot{\Xi}(t)\right)+(\kappa(t)-1) \frac{d}{d t}\left(e^{t} \Xi(t)\right)+(m(t)+1-\kappa(t)) e^{t} \Xi(t) \notag \\
			& +n(t) \frac{d}{d t}\left(e^{t} v(t)\right)+(p(t)-n(t)) e^{t} v(t) \leq 0 .
		\end{align}
		For almost every $t \in[0,+\infty)$, due to conditions (i) and (iii), it holds that 
		$$
		p(t)-n(t) \geq 0, m(t)+1-\kappa(t) \geq 0.
		$$
		Combining the above inequalities with \eqref{pt19t} yields 
		\begin{equation}\label{pt20t} 
			\frac{d}{d t}\left(e^{t} \dot{\Xi}(t)\right)+(\kappa(t)-1) \frac{d}{d t}\left(e^{t} \Xi(t)\right)+n(t) \frac{d}{d t}\left(e^{t} v(t)\right) \leq 0 . 
		\end{equation}
		Applying the following  identities 
		$$
		\begin{aligned}
			& n(t) \frac{d}{d t}\left(e^{t} v(t)\right)=\frac{d}{d t}\left[n(t) e^{t} v(t)\right]-\dot{n}(t) e^{t} v(t) \text { and } \\
			& (\kappa(t)-1) \frac{d}{d t}\left(e^{t} \Xi(t)\right)=\frac{d}{d t}\left[(\kappa(t)-1) e^{t} \Xi(t)\right]-\dot{\kappa}(t) e^{t} \Xi(t)
		\end{aligned}
		$$
		to \eqref{pt20t} gives 
		\begin{equation*}\label{pt21t} 
			\frac{d}{d t}\left(e^{t} \dot{\Xi}(t)\right)+\frac{d}{d t}\left[(\kappa(t)-1) e^{t} \Xi(t)\right]-\dot{\kappa}(t) e^{t} \Xi(t)+\frac{d}{d t}\left(n(t) e^{t} v(t)\right)-\dot{n}(t) e^{t} v(t) \leq 0 . 
		\end{equation*}
		Using assumption (ii), one has $\dot{\kappa}(t) \leq 0$ and $\dot{n}(t) \leq 0$. Hence,  from \eqref{pt21t} we obtain that 
		$$\frac{d}{d t}\left[e^{t} \dot{\Xi}(t)+(\kappa(t)-1) e^{t} \Xi(t)+n(t) e^{t} v(t)\right] \leq 0.$$
		Consequently, the special function 
		$$t \mapsto e^{t} \dot{\Xi}(t)+(\kappa(t)-1) e^{t} \Xi(t)+ n(t) e^{t} v(t),$$ is non-increasing. Hence, there exists a constant $b>0$ such that for almost every $t \in[0,+\infty)$ one has $$e^{t} \dot{\Xi}(t)+(\kappa(t)-1) e^{t} \Xi(t)+n(t) e^{t} v(t) \leq b.$$
		Since $n(t), v(t) \geq 0$ for almost every $t \in[0,+\infty)$, it holds that 
		$$ \dot{\Xi}(t)+(\kappa(t)-1) \Xi(t) \leq b e^{-t};$$ and  hence, $$\dot{\Xi}(t)+(\kappa-1) \Xi(t) \leq b e^{-t}.$$
		Now, multiplying both sides of the last inequality by $e^{(\kappa-1) t}>0$ we derive that for almost every $t \in[0,+\infty)$, 
		$$\frac{d}{d t}\left[e^{(\kappa-1) t} \Xi(t)\right] \leq be^{(\kappa-2) t}.$$
		We consider three cases:
		\begin{itemize}
			\item [(a)] if $\kappa=2$, then $0 \leq \Xi(t) \leq(b t+B(0)) e^{-t}$.
			\item [(b)] if $1<\kappa<2$, then $e^{(\kappa-1) t} \Xi(t) \leq \frac{b}{\kappa-2}\left[e^{(\kappa-2) t}-1\right]+B(0) \leq \frac{b}{2-\kappa}+B(0)$, which implies $\Xi(t) \leq\left[\frac{b}{2-\kappa}+B(0)\right] e^{-(\kappa-1) t}$;
			\item [(c)] if $\kappa>2$, then $e^{(\kappa-1) t} \Xi(t) \leq \frac{b}{\kappa-2}\left[e^{(\kappa-2) t}-1\right]+B(0) \leq \frac{b}{\kappa-2} e^{(\kappa-2) t}+B(0)$, which implies $\Xi(t) \leq \frac{b}{\kappa-2} e^{-t}+B(0) e^{-(\kappa-1) t} \leq\left(\frac{b}{\kappa-2}+B(0)\right) e^{-t}$.
		\end{itemize}
	By integrating both sides for each case, we always obtain that $w(t)$ converges exponentially to $w^*$.
	\end{proof} 
	\begin{remark} In \cite{Thanh} the author have been shown that there exist functions $\kappa(t)$ and $\rho(t)$ satisfying conditions (i)-(iii). 
		
	\end{remark}
	
	\section{ Discrete System with Linear Convergence Rate}\label{roirachoa}
	Using forward discrete time scheme to dynamical system \eqref{pt12} with a step size $z_{n}>0$, a relaxation variable $\rho_{n}>0$, a damping variable $\kappa_{n}>0$, along with initial points $w_{0}$ and $w_{1}$, we obtain the following iterative scheme:	

$$
\frac{w_{n+1}-2 w_{n}+w_{n-1}}{z_{n}^{2}}+\kappa \frac{w_{n}-w_{n-1}}{z_{n}}+\rho_{n}A(w_n)=0
$$
which is equivalent to 
$$
w_{n+1}=w_{n}+\left(1-\kappa_n z_{n}\right)\left(w_{n}-w_{n-1}\right)-\rho_{n} z_{n}^{2}A(w_n) .
$$
If $z_{n}=1, \kappa_n, \rho_{n}$ are positive constants, we can write the above scheme as

\begin{equation}\label{pt25v}
	\left\{\begin{array}{l}
		u_{n}:=w_{n}+(1-\kappa)\left(w_{n}-w_{n-1}\right) \\
		w_{n+1}=u_{n}-\rho  A(w_n) 
	\end{array}\right.
\end{equation} 
This is a projection algorithm with an inertial effect term $(1-\kappa)\left(w_{n}-w_{n-1}\right)$.

\noindent If $\kappa=1$, then \eqref{pt25v} collapses  to the projection algorithm 
\begin{equation}\label{pt26v} 
	w_{n+1}=w_{n}-\rho A(w_n) . 
\end{equation} 
Observe that this system arises from the discretization of the first-order dynamical system studied in \cite{JIT}.

To establish the convergence of the sequence generated by the discrete scheme, we first recall the difference operator and its fundamental properties. 
$$
\begin{array}{cl}
	w^{\Delta}(n):=w_{n+1}-w_{n}, & w^{\nabla}(n):=w_{n}-w_{n-1} \\
	w^{\Delta \nabla}:=\left(w^{\Delta}\right)^{\nabla}, & w^{\nabla \Delta}:=\left(w^{\nabla}\right)^{\Delta} .
\end{array}
$$
\begin{lem}\cite{Vuong2026}\label{lm4.1}
	The following statements  hold:  $$
	\begin{aligned}
		& \langle f, g\rangle^{\Delta}(n)=\left\langle f^{\Delta}(n), g_{n}\right\rangle+\left\langle z_{n}, g^{\Delta}(n)\right\rangle+\left\langle f^{\Delta}(n), g^{\Delta}(n)\right\rangle, \\
		& \langle f, g\rangle^{\nabla}(n)=\left\langle f^{\nabla}(n), g_{n}\right\rangle+\left\langle z_{n}, g^{\nabla}(n)\right\rangle-\left\langle f^{\nabla}(n), g^{\nabla}(n)\right\rangle, \\
		& w^{\Delta \nabla}(n)=w^{\nabla \Delta}(n)=w_{n+1}-2 w_{n}+w_{n-1}=w^{\Delta}(n)-w^{\nabla}(n) .
	\end{aligned}
	$$
\end{lem}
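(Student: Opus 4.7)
The plan is to verify each of the three identities directly from the definitions, using only bilinearity of the inner product. There is no genuine obstacle here; this is really an exercise in correctly tracking signs, and that is where the only subtlety lies.

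For the first identity, I would begin from the definition $\langle f,g\rangle^{\Delta}(n)=\langle f_{n+1},g_{n+1}\rangle-\langle f_n,g_n\rangle$. I then substitute $f_{n+1}=f_n+f^{\Delta}(n)$ and $g_{n+1}=g_n+g^{\Delta}(n)$ and expand bilinearly:
\[
\langle f_n+f^{\Delta}(n),\,g_n+g^{\Delta}(n)\rangle=\langle f_n,g_n\rangle+\langle f^{\Delta}(n),g_n\rangle+\langle f_n,g^{\Delta}(n)\rangle+\langle f^{\Delta}(n),g^{\Delta}(n)\rangle.
\]
Subtracting $\langle f_n,g_n\rangle$ yields the claimed forward product rule (with $f_n$ in the middle term, where the statement in the excerpt reads $z_n$; that is a typographical slip).

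For the second identity I would argue symmetrically but from below: write $f_{n-1}=f_n-f^{\nabla}(n)$ and $g_{n-1}=g_n-g^{\nabla}(n)$ and expand $\langle f_{n-1},g_{n-1}\rangle$. The two cross terms acquire a minus sign while the $\langle f^{\nabla}(n),g^{\nabla}(n)\rangle$ term comes back with a plus, so upon computing $\langle f_n,g_n\rangle-\langle f_{n-1},g_{n-1}\rangle$ the cross terms regain their positive sign and the quadratic correction appears with a minus. This is the only place where one must be careful: the sign flip on the last term distinguishes the backward rule from the forward one and is the step most likely to be mis-copied in practice.

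For the third identity I would just chase definitions:
\[
w^{\Delta\nabla}(n)=w^{\Delta}(n)-w^{\Delta}(n-1)=(w_{n+1}-w_n)-(w_n-w_{n-1})=w_{n+1}-2w_n+w_{n-1},
\]
and an identical expansion shows $w^{\nabla\Delta}(n)$ equals the same quantity. Finally, $w^{\Delta}(n)-w^{\nabla}(n)=(w_{n+1}-w_n)-(w_n-w_{n-1})$ matches term by term, closing the chain of equalities. Since the proof in the source paper \cite{Vuong2026} proceeds exactly along these lines, I would simply cite it rather than reproduce the three-line computation in full.
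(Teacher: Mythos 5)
Your verification is correct: all three identities follow exactly as you compute them, by bilinear expansion of $\langle f_{n\pm1},g_{n\pm1}\rangle$ for the two product rules and by unwinding the definitions for the second-difference identity, and your observation that the $z_n$ appearing in the middle terms of the statement should read $f_n$ is also right. The paper itself gives no proof of this lemma --- it is simply cited from the reference --- so there is nothing to compare against beyond noting that your direct computation is the standard (and essentially only) argument, including the sign flip on the quadratic correction term that distinguishes the backward rule from the forward one.
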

By applying the difference operations, \eqref{pt25v} can be rewritten as follows 
\begin{equation}\label{pt27v}
	w^{\Delta \nabla}(n)+\kappa w^{\nabla}(n)=-\rho A(w_n)  . 
\end{equation}
We now define 
$$
\mu_{n}:=\left\|w^{\Delta}(n)\right\|^{2}, \quad y_n:=\left\|w^{\nabla}(n)\right\|^{2}, \quad x_{n}:=\left\|w_{n}-w^{*}\right\|^{2} .
$$

\begin{thm}  Let $K$ be a nonempty, closed, convex subset of $H$. Let $ T:H\longrightarrow H$ be $\eta$-Lipschitz continuous, $\lambda$-strongly monotone, let $g: H\longrightarrow H$ be a $\beta$-Lipschitz continuous, $f:K\longrightarrow \R\cup\{+\infty\}$ be a proper, convex, l.s.c function. Assume further that $(T, g)$ is $\alpha$-strongly couple monotone and that there exists $\gamma>0$ such that $a>0$ where  $a$ defined as in \eqref{dna}
	Moreover, the coefficients $\kappa$ and $\rho$ are assumed to satisfy the following conditions.
\begin{enumerate}
	\item [(A1)]$0<\kappa<1$
	\item [(A2)] $0<\rho<a_{1} \cdot \min \left\{\frac{1-\kappa}{4}, \frac{\kappa^{2}}{4-\kappa}\right\}$
\end{enumerate}
Then, the sequence ${w_n}$ generated by \eqref{pt27v} converges linearly to the unique solution of the GIMVIP \eqref{pt10}. 	
\end{thm}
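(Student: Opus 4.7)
The plan is to carry out a discrete Lyapunov argument in the spirit of the continuous-time analysis of Theorem~\ref{thm4.1}. The basic target is a nonnegative energy
\[
V_n := x_n + \sigma\, y_n,
\]
with $\sigma>0$ to be tuned, together with a rate $q\in(0,1)$ such that $V_{n+1} \le q V_n$ for every $n \ge 1$; iterating this yields $\|w_n - w^*\| \le q^{n/2}\sqrt{V_0}$, i.e.\ linear convergence. If the plain two-term energy proves insufficient to absorb the inertial contribution, I would upgrade it to a positive semidefinite quadratic form in $(w_n - w^*, w_n - w_{n-1})$, namely $V_n = x_n + \sigma_1 y_n + \sigma_2 \langle w_n - w^*, w_n - w_{n-1}\rangle$, which still dominates $x_n$ up to a positive constant once the cross term is controlled by Cauchy--Schwarz.

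To derive the one-step inequality, I would first rewrite \eqref{pt25v} as $w_{n+1} - w_n = (1-\kappa)(w_n - w_{n-1}) - \rho A(w_n)$ and expand
\[
x_{n+1} = \|w_n - w^* + (1-\kappa)(w_n - w_{n-1}) - \rho A(w_n)\|^2.
\]
Three tools will handle the resulting cross terms. First, the polarization identity $2\langle w_n - w^*, w_n - w_{n-1}\rangle = x_n - x_{n-1} + y_n$ turns the inertial inner product into a telescoping combination of $x_n$, $x_{n-1}$, $y_n$. Second, the combined monotonicity estimate $\langle A(w_n), w_n - w^*\rangle \ge \tfrac{a_1}{2}\|A(w_n)\|^2 + \tfrac{a}{2} x_n$, obtained by averaging \eqref{pt23} and \eqref{pt24}, controls the inner product with $A(w_n)$. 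Third, Young's inequality with a free parameter $s>0$ controls $\langle w_n - w_{n-1}, A(w_n)\rangle$. An analogous expansion of $y_{n+1} = \mu_n$ gives $\mu_n \le (1+s)(1-\kappa)^2 y_n + (1+1/s)\rho^2 \|A(w_n)\|^2$. Substituting both bounds into $V_{n+1} - q V_n$ yields a linear combination of $x_n$, $x_{n-1}$, $y_n$, and $\|A(w_n)\|^2$; where the net coefficient of $\|A(w_n)\|^2$ turns out negative I would additionally invoke the lower Lipschitz estimate $\|A(w_n)\|^2 \ge a^2 x_n$ from \eqref{pt23n} to transfer the coercivity of $A$ onto $x_n$.

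The final step is the joint tuning of the Lyapunov weights, the Young parameter $s$, and the rate $q$ so that \emph{every} coefficient in $V_{n+1} - q V_n$ is nonpositive with $q<1$. The $x_{n-1}$ coefficient, equal to $-(1-\kappa)$, is automatically nonpositive under (A1). The remaining three constraints translate into upper bounds on $\rho$: the coefficient of $\|A(w_n)\|^2$ must vanish or become negative, producing a bound of the form $\rho < a_1(1-\kappa)/c$; and the coefficient of $y_n$ must be driven nonpositive by the term $-q\sigma y_n$, producing a second bound of the form $\rho < a_1 \kappa^2/c'$. These should reduce, after optimal choice of $s$ and of the Lyapunov weights, to exactly the two explicit constants packaged in (A2). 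The main obstacle is precisely this simultaneous algebraic balancing: the parameters must be chosen in a coupled way so that both bounds on $\rho$ are sharp and produce exactly the constants $4$ and $4-\kappa$ in (A2), while keeping $q<1$. Once the tuning is carried out, the contraction $V_{n+1} \le q V_n$ and hence the linear convergence of $\|w_n - w^*\|$ follow by a routine induction on $n$.
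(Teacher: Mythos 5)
Your outline is a recognizable alternative strategy (a one-step contraction $V_{n+1}\le qV_n$ of a quadratic energy), and it differs structurally from the paper, which instead keeps the full three-term recursion: the paper starts from the second-difference identity $x^{\Delta\nabla}(n)+\kappa x^{\nabla}(n)=-2\rho\langle A(w_n),w_n-w^*\rangle+2\mu_n-\kappa y_n-y^{\Delta}(n)$, eliminates $A(w_n)$ via $A(w_n)=-\tfrac{1}{\rho}\bigl(w^{\Delta\nabla}(n)+\kappa w^{\nabla}(n)\bigr)$, kills the resulting $\|w^{\Delta\nabla}(n)\|^2$ term with the bound $\tfrac{a_1}{\rho}(1-\kappa)\ge 4$, and then multiplies by $\varepsilon^{n+1}$ and telescopes \emph{twice} (the discrete analogue of the $e^t$ integrating factor in Theorem~\ref{thm4.1}), arriving at $\varepsilon^{k+1}x_{k+1}\le M_1k+\varepsilon x_1$, i.e.\ R-linear convergence. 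However, your proposal has a genuine gap: the decisive step --- verifying that the specific constants $\tfrac{1-\kappa}{4}$ and $\tfrac{\kappa^2}{4-\kappa}$ in (A2) make all coefficients of $V_{n+1}-qV_n$ nonpositive with $q<1$ --- is not carried out but only asserted to ``reduce to exactly'' (A2). That tuning is the entire content of the theorem, and it cannot be waved through, because the version of the argument you do sketch provably fails.

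Concretely: expanding $x_{n+1}=\|e_n+(1-\kappa)d_n-\rho A(w_n)\|^2$ with $e_n=w_n-w^*$, $d_n=w_n-w_{n-1}$, the polarization identity contributes $+(1-\kappa)x_n-(1-\kappa)x_{n-1}$. You propose to discard the nonpositive $-(1-\kappa)x_{n-1}$ term, so the coefficient of $x_n$ is at least $1+(1-\kappa)-\rho a-(\rho a_1-\rho^2)a^2$ even after transferring all available negativity from the $\|A(w_n)\|^2$ bucket via \eqref{pt23n}. Since \eqref{pt25} and \eqref{pt23n} force $a\le 2\eta+\gamma\beta$, one has $aa_1=a^2/(2\eta+\gamma\beta)^2\le 1$, whence under (A2), $\rho\,(a+a_1a^2)<\tfrac{a_1(1-\kappa)}{4}(a+a_1a^2)\le\tfrac{1-\kappa}{2}<1-\kappa$; the coefficient of $x_n$ therefore stays strictly above $1$ and no $q<1$ exists for the plain two-term energy. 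You are thus forced into your fallback energy with the cross term $\langle w_n-w^*,w_n-w_{n-1}\rangle$ (equivalently a three-term form involving $x_{n-1}$), where one must simultaneously certify positive definiteness of $V_n$, nonpositivity of four coupled coefficients, and $q<1$ --- and whether this closes under exactly (A1)--(A2), or requires strengthening them, is left unestablished. Until that computation is done (or you switch to the paper's exponential-multiplier/double-summation route, which only needs the three strict inequalities \eqref{pt29v}--\eqref{pt31v} obtained by a limiting argument in $\varepsilon$), the proof is incomplete.
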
 

\begin{proof}  From (A1)-(A2) one has 
	\begin{equation}\label{pt28v} 
		\frac{a_{1}}{\rho}(1-\kappa) \geq 4. 
	\end{equation}
	In addition, 
	$$
	\kappa\left(\frac{a_{1} \kappa}{\rho}+1\right)-4>0
	$$
	Thus, one has  
	$$
	\begin{gathered}
		\lim _{\varepsilon \rightarrow 1^{+}}\left[\varepsilon\left(\kappa\left(\frac{a_{1} \kappa}{\rho}+1\right)-4\right)-(\varepsilon-1)\left(\frac{a_{1} \kappa}{\rho}+1\right)\right]=\kappa\left(\frac{a_{1} \kappa}{\rho}+1\right)-4>0 \\
		\lim _{\varepsilon \rightarrow 1^{+}}\left[\varepsilon a \rho-\kappa \varepsilon(\varepsilon-1)+(\varepsilon-1)^{2}\right]=a \rho>0 \\
		\lim _{\varepsilon \rightarrow 1^{+}}\left[1-\varepsilon^{2}(1-\kappa)\right]=1-(1-\kappa)=\kappa>0
	\end{gathered}
	$$
	As a result, there exists $\varepsilon>1$ satisfying  the followings
	\begin{gather}
		\varepsilon\left(\kappa\left(\frac{a_{1} \kappa}{\rho}+1\right)-4\right)-(\varepsilon-1)\left(\frac{a_{1} \kappa}{\rho}+1\right)>0  \label{pt29v}\\
		\varepsilon a \rho-\kappa \varepsilon(\varepsilon-1)+(\varepsilon-1)^{2}>0  \label{pt30v}\\
		1-\varepsilon^{2}(1-\kappa)>0 \label{pt31v}
	\end{gather}
	Let 
	$$
	C_{0}:=\kappa\left(\frac{a_{1}}{\rho} \kappa+1\right)-4, \quad C_{1}:=\frac{a_{1}}{\rho} \kappa+1.
	$$
	One has 
	$$
	x^{\Delta \nabla}(n)+\kappa x^{\nabla}(n)= -2 \rho\left\langle A(w_n), w_{n}-w^{*}\right\rangle+2 \mu_n-\kappa y_{n}-y^{\Delta}(n). 
	$$
	By \eqref{pt23} and \eqref{pt23n}, it follows that 
	$$
	x^{\Delta \nabla}(n)+\kappa x^{\nabla}(n) \leq-a_{1} \rho\left\|A(w_n)\right\|^{2}-a \rho x_{n}+2 \mu_n-\kappa y_{n}-y^{\Delta}(n).
	$$
	Then
	$$
	\begin{gathered}
		x^{\Delta \nabla}(n)+\kappa x^{\nabla}(n)+a \rho x_{n}+\kappa y_{n}+y^{\Delta}(n) \\
		\leq-a_{1} \rho\left\|A(w_n)\right\|^{2}+2 \mu_n \\
		=-\frac{a_{1}}{\rho}\left\|w^{\Delta \nabla}(n)+\kappa w^{\nabla}(n)\right\|^{2}+2 \mu_n \\
		=-\frac{a_{1}}{\rho}\left(\left\|w^{\Delta \nabla}(n)\right\|^{2}+\kappa^{2} y_{n}+2 \kappa\left\langle w^{\Delta \nabla}(n), w^{\nabla}(n)\right\rangle\right)+2 \mu_n \\
		=-\frac{a_{1}}{\rho}\left(\left\|w^{\Delta \nabla}(n)\right\|^{2}(1-\kappa)+\kappa^{2} y_{n}+\kappa y^{\Delta}(n)\right)+2 \mu_n .
	\end{gathered}
	$$
	This implies that 
	$$
	\begin{gathered}
		x^{\Delta \nabla}(n)+\kappa x^{\nabla}(n)+a \rho x_{n}+\kappa\left(\frac{a_{1}}{\rho} \kappa+1\right) y_{n}+\left(\frac{a_{1}}{\rho} \kappa+1\right) y^{\Delta}(n) \\
		\leq-\frac{a_{1}}{\rho}(1-\kappa)\left\|w^{\Delta \nabla}(n)\right\|^{2}+2 \mu_n.
	\end{gathered}
	$$
	Since 
	$$
	\begin{aligned}
		\mu_n=\| w^{\Delta}(n) & -w^{\nabla}(n)+w^{\nabla}(n)\left\|^{2}=\right\| w^{\Delta \nabla}(n)+w^{\nabla}(n) \|^{2} \\
		& \leq 2\left(\left\|w^{\Delta \nabla}(n)\right\|^{2}+\left\|w^{\nabla}(n)\right\|^{2}\right), 
	\end{aligned}
	$$
	and by \eqref{pt28v} we derive that 
	\begin{equation*}  
		\begin{gathered}
			x^{\Delta \nabla}(n)+\kappa x^{\nabla}(n)+a \rho x_{n}+C_{0} y_{n}+C_{1} y^{\Delta}(n) \\
			\leq\left(4-\frac{a_{1}}{\rho}(1-\kappa)\right)\left\|w^{\Delta \nabla}(n)\right\|^{2} \leq 0 
		\end{gathered}
	\end{equation*} 
	Multiplying both sides by $\varepsilon^{n+1}$ and then using Lemma \ref{lm4.1} one obtains 
	$$
	\begin{gathered}
		0 \geq \varepsilon^{n+1}\left(x^{\Delta \nabla}(n)+\kappa x^{\nabla}(n)+a \rho x_{n}+C_{0} y_{n}+C_{1} y^{\Delta}(n)\right) \\
		=\left(\varepsilon^{n} x^{\nabla}\right)^{\Delta}(n)+(\varepsilon \kappa-\varepsilon+1)\left(\varepsilon^{n+1} x\right)^{\nabla}(n)+\varepsilon^{n} x_{n}\left[\varepsilon a \rho-\kappa \varepsilon(\varepsilon-1)+(\varepsilon-1)^{2}\right] \\
		+C_{1}\left(\varepsilon^{n} y\right)^{\Delta}(n)+\varepsilon^{n} y_{n}\left[\varepsilon C_{0}-(\varepsilon-1) C_{1}\right]
	\end{gathered}
	$$
	By \eqref{pt29v}-\eqref{pt30v}, it follows from the inequality that 
	$$
	0 \geq\left(\varepsilon^{n} x^{\nabla}\right)^{\Delta}(n)+(\varepsilon \kappa-\varepsilon+1)\left(\varepsilon^{n+1} x\right)^{\nabla}(n)+C_{1}\left(\varepsilon^{n} y\right)^{\Delta}(n). 
	$$
	By summing the preceding inequality over $n=1,\dots,m$, we get 
	$$
	M_{1} \geq \varepsilon^{m+1} x^{\nabla}(m+1)+(\varepsilon \kappa-\varepsilon+1) \varepsilon^{m+1} x_{m}+C_{1} \varepsilon^{m+1} y_{m+1}
	$$
	where $M_{1}$ is some positive constant. Since $C_{1}>0$, we derive that 
	$$
	\begin{aligned}
		M_{1} & \geq \varepsilon^{m+1} x^{\nabla}(m+1)+(\varepsilon \kappa-\varepsilon+1) \varepsilon^{m+1} x_{m} \\
		& =\varepsilon^{m+1} x^{\Delta}(m)+(\varepsilon \kappa-\varepsilon+1) \varepsilon^{m+1} x_{m} \\
		& =\left(\varepsilon^{m} x_{m}\right)^{\Delta}+\varepsilon^{m} x_{m}\left[\varepsilon^{2}(\kappa-1)+1\right] \\
		& \geq\left(\varepsilon^{m} x_{m}\right)^{\Delta}
	\end{aligned}
	$$
	where the last inequality is due to \eqref{pt31v}. Summing the inequality for $m=1$ to $m=k$ yields 
	$$
	M_{1} k+\varepsilon x_{1} \geq \varepsilon^{k+1} x_{k+1}
	$$
	meaning that the sequence $\left\{w_{n}\right\}$ generated by \eqref{pt27v} converges linearly to the unique solution of the GIMVIP \eqref{pt10}.
\end{proof}

	\section{Numerical Experiment}\label{vdso} 
	In this section, we present a numerical example to illustrate the convergence behavior of the iterative algorithms \eqref{pt25v} and \eqref{pt26v}. The numerical simulations were carried out in Python using Google Colab.

\begin{exm}
Let $d=1$ and $K=[0,+\infty)$. Define
\[
g(w)=\frac{w}{2}, \qquad 
T(w)=\frac{3w}{4}, \qquad 
f(w)=w^2+2w+1, \quad w\in\mathbb{R}.
\]
It is straightforward to verify that $g$ is Lipschitz continuous with constant $\beta=\tfrac{1}{2}$ and strongly monotone with constant $\zeta=\tfrac{1}{2}$, while $T$ is Lipschitz continuous with constant $\eta=\tfrac{3}{4}$ and strongly monotone with constant $\lambda=\tfrac{3}{4}$. Consequently, the pair $(T,g)$ is strongly coupled monotone with constant $\alpha=\tfrac{3}{8}$.

As shown in \cite{Nam}, these parameters satisfy condition~(ii) of Theorem~\ref{thm4.1}. We further set $\gamma=1.4$ and then $a>0$ where $a$ is defined in~\eqref{dna}. A direct verification shows that $w^*=0$ is a solution of~\eqref{pt10}.

We now compare the proposed inertial (second-order) algorithm with the corresponding first-order method for different values of $\kappa$ and $\rho$. Unless otherwise stated, the initial values are chosen as
\[
w_0=w_1=100.
\]

\begin{table}[htbp]
	\centering
	\caption{Performance comparison of inertial and non-inertial methods with $\rho=0.09$}
	\label{tab:inertial2}
	\begin{tabular}{|l|c|}
		\hline
		Method & Error value \\ \hline
		Inertial ($\kappa = 0.1$)  & $1.5258126167 \times 10^{-10}$ \\ \hline
		Inertial ($\kappa = 0.59$) & $3.5502140699 \times 10^{-28}$ \\ \hline
		Inertial ($\kappa = 0.9$)  & $8.8911678718 \times 10^{-16}$ \\ \hline
		Standard (No inertia)     & $6.6935161972 \times 10^{-14}$ \\ \hline
	\end{tabular}
\end{table}

Figure~\ref{fg1} illustrates the convergence behavior of Algorithms~\ref{pt25v} and \ref{pt26v} for different values of $\kappa$ with $\rho=0.09$.

\begin{figure}[H]
	\centering
	\includegraphics[width=0.75\linewidth]{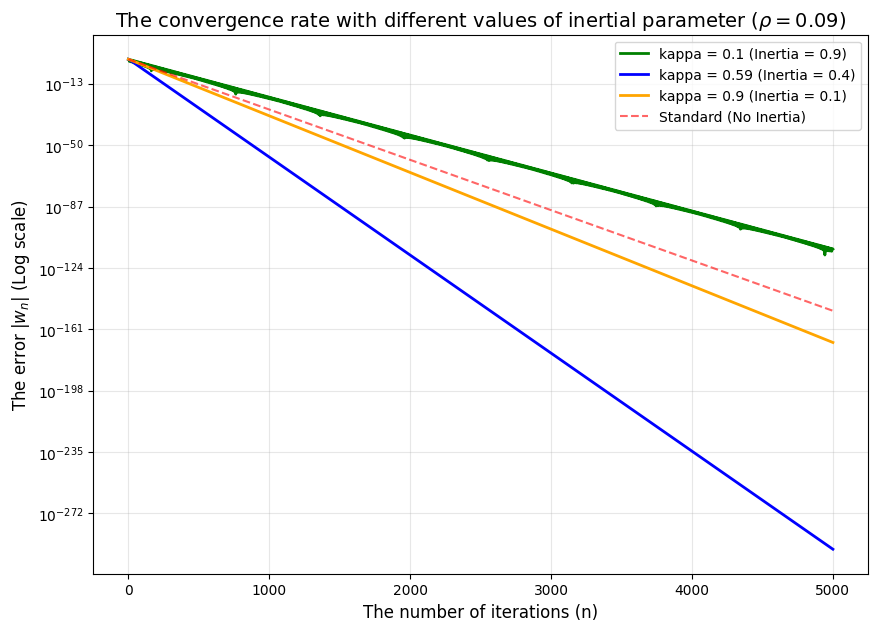}
	\caption{Convergence rate of Algorithms~\ref{pt25v} and \ref{pt26v} for $\rho=0.09$.}
	\label{fg1}
\end{figure}

\begin{table}[htbp]
	\centering
	\caption{Performance comparison of inertial and non-inertial methods with $\rho=0.0019$}
	\label{tab:inertial}
	\begin{tabular}{|l|c|}
		\hline
		Method & Error value \\ \hline
		Inertial ($\kappa = 0.1$)  & $1.5856421935 \times 10^{-35}$ \\ \hline
		Inertial ($\kappa = 0.59$) & $5.5334312632 \times 10^{-4}$  \\ \hline
		Inertial ($\kappa = 0.9$)  & $3.6336573415 \times 10^{-2}$  \\ \hline
		Standard (No inertia)     & $8.0263714274 \times 10^{-2}$  \\ \hline
	\end{tabular}
\end{table}

The convergence profiles obtained with different values of $\kappa$ and $\rho=0.0019$ after $5000$ iterations are depicted in Figure~\ref{fg2}.

\begin{figure}[H]
	\centering
	\includegraphics[width=0.75\linewidth]{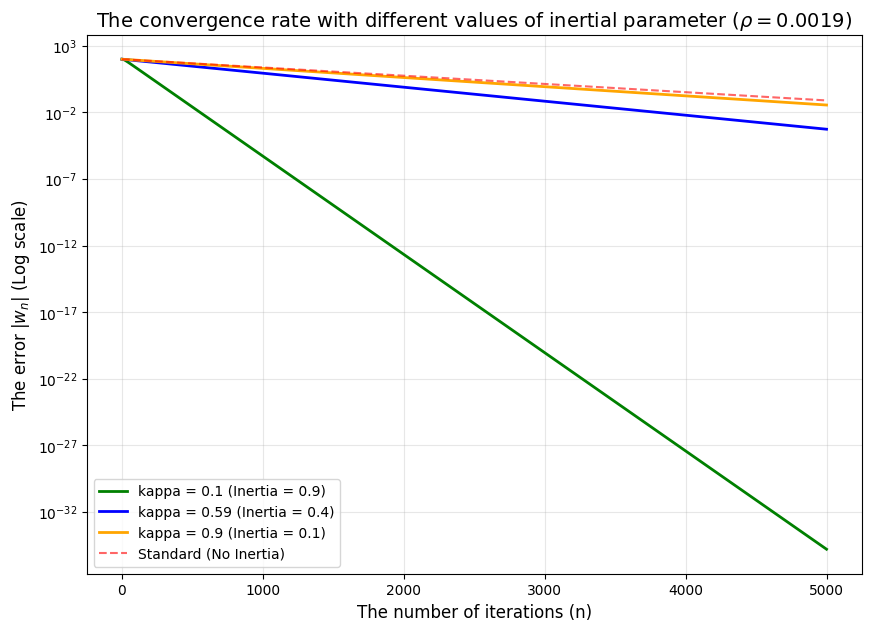}
	\caption{Convergence rate of Algorithms~\ref{pt25v} and \ref{pt26v} for $\rho=0.0019$.}
	\label{fg2}
\end{figure}

\end{exm} 

	\section{ Conclusion}\label{kl} 
In this paper, we have investigated a second-order inertial dynamical system associated with the generalized inverse mixed variational inequality problem (GIMVIP). Under mild assumptions, we established the global existence and uniqueness of solution trajectories for the proposed continuous-time system. By constructing an appropriate Lyapunov functional, we proved that the trajectories converge exponentially to the unique solution of the GIMVIP, thereby highlighting the strong stability properties of the considered second-order dynamics.

Furthermore, motivated by the continuous model, we derived an accelerated discrete algorithm and rigorously established its linear convergence rate. This result provides a clear connection between the theoretical analysis of second-order dynamical systems and practical numerical algorithms for solving generalized inverse variational inequality problems. Numerical experiments were presented to support the theoretical findings and to demonstrate that the proposed second-order approach exhibits faster error decay and more robust performance when compared with classical first-order methods.

The results of this work indicate that second-order inertial systems constitute an effective and theoretically sound framework for solving generalized inverse models, preserving accelerated convergence behavior even in highly generalized settings.

	\section*{Declaration of interests}
	The authors declare that they have no known competing financial interests or personal relationships that could have appearedzo influence the work reported in this paper.
	
	\section*{Conflict of interest}
	The authors declare no conflicts of interest in this paper.

\end{document}